\documentclass[preprint,12pt,authoryear]{elsarticle}

\usepackage{amssymb}
\usepackage{amsmath}
\usepackage{amsthm}
\usepackage{tikzit}

\tikzstyle{black dot}=[fill=black, draw=black, shape=circle]
\tikzstyle{red dot}=[fill=red, draw=black, shape=circle]
\tikzstyle{open dot}=[fill=none, draw=black, shape=circle]
\tikzstyle{blue dot}=[fill=blue, draw=black, shape=circle]
\tikzstyle{green dot}=[fill={rgb,255: red,0; green,143; blue,0}, draw=black, shape=circle]
\tikzstyle{pink dot}=[fill={rgb,255: red,128; green,0; blue,128}, draw=black, shape=circle]

\tikzstyle{thin edge}=[-, very thick]
\tikzstyle{directed edge}=[->]
\tikzstyle{double directed}=[<->]
\tikzstyle{red edge}=[-, draw=red, very thick]
\tikzstyle{blue edge}=[-, draw=blue, very thick]
\tikzstyle{pink edge}=[-, draw={rgb,255: red,128; green,0; blue,128}, very thick]
\tikzstyle{green edge}=[-, draw={rgb,255: red,0; green,143; blue,0}, very thick]
\tikzstyle{black dashed}=[-, dashed, very thick]
\tikzstyle{red dashed}=[-, draw=red, dashed, very thick]
\tikzstyle{blue dashed}=[-, draw=blue, dashed, very thick]

\newtheorem{theorem}{Theorem}

\newtheorem{conjecture}[theorem]{Conjecture}
\theoremstyle{definition}

\theoremstyle{definition}

\newtheorem{corollary}{Corollary}[theorem]
\newtheorem{lemma}[theorem]{Lemma}

\journal{Discrete Applied Mathematics}

\begin{document}

\begin{frontmatter}



\title{Minimum Weakly Saturated Graphs and Bootstrap Percolation in General Host Graphs}

\author[1,2]{Roman Vasquez}

\affiliation[1]{organization={Wesleyan College},
    addressline={4760 Forsyth Rd}, 
    city={Macon},
    citysep={},
    postcode={31201}, 
    state={GA},
    country={United States}}

\affiliation[2]{organization={Auburn University},
    city={Auburn},
    citysep={}, 
    postcode={36849}, 
    state={AL},
    country={United States}}

\begin{abstract}

A graph $G$ is weakly $H$-saturated if one can obtain $K_n$ by adding one edge to $G$ at a time, where each additional edge creates at least one new copy of $H$. The minimum number of edges needed for a weakly $H$-saturated graph $G$ of order $n$ is known as the weak saturation number of $H$, written $wsat(n,H)$. A graph $G$ is minimum weakly saturated if $wsat(n,G)=|E(G)|-1$ for some value of $n$.

We explore classes of minimum weakly saturated graphs and their connection to the $H$-bootstrap percolation process, as well as weak saturation in a more general setting than the complete graph.
\end{abstract}


\begin{keyword}

Weak saturation \sep minimum weakly saturated \sep bootstrap percolation \sep self-percolation

\end{keyword}

\end{frontmatter}

\section{Introduction}
\label{sec:Introduction}

Assume all graphs are finite and simple. We use the $+$ symbol to refer to the disjoint union of two graphs, let $P_n$ be the path on $n$ vertices, and let $C_n$ be the cycle on $n$ vertices. We will use standard definitions for all graph theory terms that are not otherwise defined.

A graph $G$ of order $n$ is \textit{weakly $H$-saturated} if there exists a nested sequence of graphs $G= G_0 \subset G_1 \subset \cdots \subset G_k = K_n$ such that, for all $i \in [k]$, $G_i$ is obtained from $G_{i-1}$ by adding exactly one edge and $k_H(G_{i-1}) < k_H(G_i)$, where $k_H(G)$ is the number of copies of graph $H$ in $G.$ In other words, $G$ is weakly $H$-saturated if we can obtain $K_n$ by adding one edge from $\overline{G} = K_n\backslash G$ to $G$ at a time, where each additional edge creates at least one new copy of $H$. 

The minimum number of edges needed for a weakly $H$-saturated graph $G$ of order $n$ is the \textit{weak saturation number} of $H$, written $wsat(n,H)$. For a more detailed definition, see \citet{OGWeak}.

\subsection{Bootstrap Percolation on Edges}
\label{sec:Edge_Bootstrap}

There exists a related process known as bootstrap percolation, which can be performed either on the edges or the vertices of a graph $G$, though here we focus on edge bootstrap percolation, referred to from this point on simply as ``bootstrap percolation'' or ``$H$-bootstrap percolation.'' This discrete-time, deterministic process models the spread of an infection from an initial set of edges (referred to as $G_0=E(G)$) to the edges of the complete graph on $|V(G)|$ vertices, according to the presence of subgraphs isomorphic to another graph $H$, which we will call the ``target'' graph. If the entire edge set of the complete graph eventually becomes infected, $G$ (or $G_0$) is said to ``percolate'' or ``$H$-percolate.''

Given a graph $H$ on at most $n$ vertices and initial infected set $G_0$, \textit{$H$-bootstrap percolation} proceeds as follows: at time $t>0$, an edge of $\overline{G_{t-1}}$ becomes infected if it is the only missing edge in an otherwise infected copy of $H$ at time $t-1$, where $G_t$ denotes the infected edges at time $t$. Note that there is no restriction on how many edges can become infected in one round---the primary difference between weak saturation and edge-bootstrap percolation. It is therefore interesting to examine the number of rounds needed to percolate,  written $k=t_f+1$, where $t=t_f$ is the first time step at which $G_t=G_{t+1}$, at which point the process \textit{stabilizes}.

Notice that an order $n$ graph $G$ is weakly $H$-saturated if and only if $G$ $H$-percolates to the complete graph $K_n$ and that it will take at most $|E(\overline{G})|+1$ rounds to percolate. The two processes are identical except in the number of rounds required to stabilize, so we will use the language of weak saturation and bootstrap percolation interchangeably.

Typically the process is performed in the complete graph on $|V(G)|$ vertices, as described, though we will also consider edge bootstrap percolation in a more general graph $X\neq K_n$, in which case the edges not belonging to $E(X)$ are forbidden. Note that we do not require $|V(X)|=|V(G)|$, though this restriction could be accommodated by adding isolates to $G$ as needed. In this general host graph, we say that a subgraph $G$ of $X$ \textit{$H$-percolates to $X$} (or just \textit{percolates to $X$} when $H$ is clear from context) if eventually it stabilizes with $G_t=G_{t+1}=X$.

Note that a graph $G$ will $H$-percolate to $X$ only if every edge of $X$ either belongs to $G_0$ or is contained in a subgraph of $X$ isomorphic to $H$.  

\section{Results}
\label{sec:Results}

The following lemma is frequently useful in proving weak saturation results.

\begin{lemma}
    \label{thm:degree}

    If $G$ and $H$ are graphs, then $\delta(G)+1<\delta(H) \implies G$ does not $H$-percolate to any $K_n$ for $n\geq |V(G)|$.
\end{lemma}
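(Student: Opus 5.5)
Let $n\geq |V(G)|$, regard $G$ as a spanning subgraph of $K_n$ by adding isolated vertices, and pick a vertex $v$ of minimum degree in $G$. (If $n>|V(G)|$ the added isolates have degree $0$, and the argument below works for them too, so we may take $v$ of degree $\delta(G)$ or smaller.) The plan is to show that no edge incident to $v$ is ever infected. Since $n\geq |V(G)|\geq \delta(G)+1$ and we may assume $n\ge 2$, $v$ has at least one non-neighbor in $K_n$. Hence $K_n$ is never reached and $G$ does not $H$-percolate.

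Suppose, for contradiction, that some edge at $v$ is eventually added. Look at the first step of the process (in the one-edge-at-a-time weak saturation formulation) at which an edge $vw$ is added. Immediately before this step, the degree of $v$ in the current graph $G_{i-1}$ is still $\deg_G(v)\leq \delta(G)$, because no edge at $v$ has been added yet. The added edge must create a new copy $H'$ of $H$ in $G_i=G_{i-1}+vw$, and that copy must contain the edge $vw$, since any copy avoiding $vw$ already lay in $G_{i-1}$. So $v\in V(H')$, and therefore
\[
\delta(H)\leq \deg_{H'}(v)\leq \deg_{G_i}(v)=\deg_{G}(v)+1\leq \delta(G)+1,
\]
which contradicts the hypothesis $\delta(G)+1<\delta(H)$.

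This proves the lemma. The only delicate point is fixing the right moment: we need the \emph{first} edge at $v$, so that its degree has not yet grown, together with the observation that a newly created copy of $H$ must use the new edge. With those two facts the argument is routine. Since bootstrap percolation and weak saturation produce the same final graph, the conclusion holds in either language.
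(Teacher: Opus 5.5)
Your proof is correct and follows the paper's argument: the first infected edge at a minimum-degree vertex $v$ must lie in the newly created copy of $H$, which forces $\delta(H)\le \deg_G(v)+1\le \delta(G)+1$. Your use of the one-edge-at-a-time formulation makes that ``$+1$'' more precise than the paper's round-based wording. One small slip is that $n\ge |V(G)|\ge \delta(G)+1$ does not by itself give $v$ a non-neighbor in $K_n$, since it permits $G=K_n$, which trivially percolates; justify it instead with the standing assumption $|V(H)|\le n$, which gives $n\ge \delta(H)+1\ge \delta(G)+3$.
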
 

\begin{proof}
    If $G$ were to $H$-percolate, then at some time step an edge incident to a vertex $v$ with $\deg_G(v)=\delta(G)$ must become infected for the first time. This means that it must be part of a newly-infected copy of $H$, but this is impossible because after adding the new edge $v$ would have degree at most $\deg(v)\leq \delta(G)+1 < \delta(H)$. Therefore, no edge incident to any such $v$ will ever become infected and, as such, $G$ cannot $H$-percolate to the complete graph.
\end{proof}

\begin{corollary}
    \label{cor:degree}
    Suppose $\delta(H)>1$. If a graph $G$ has a vertex of degree 0, then $G$ does not $H$-percolate to any $K_n$ for $n\geq |V(G)|$.
\end{corollary}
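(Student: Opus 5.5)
This is a direct specialization of Lemma~\ref{thm:degree}. If $G$ has a vertex of degree $0$, then $\delta(G)=0$, so $\delta(G)+1=1$. The hypothesis $\delta(H)>1$ then gives $\delta(G)+1<\delta(H)$. Lemma~\ref{thm:degree} applies verbatim and yields that $G$ does not $H$-percolate to any $K_n$ with $n\geq |V(G)|$.

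One subtlety concerns $n>|V(G)|$. In that case percolation is really performed on $G$ together with $n-|V(G)|$ added isolates. Those extra vertices have degree $0$, so they keep the minimum degree at $0$ and cause no problem. I will read Lemma~\ref{thm:degree} as covering this padded situation, since its proof only uses a vertex of minimum degree.

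For safety, I would also note the underlying mechanism directly. Let $v$ be an isolated vertex. The first edge at $v$ to become infected must be the unique missing edge of a copy of $H$. In that copy, $v$ has degree at most $1<\delta(H)$, which is impossible. Hence $v$ stays isolated forever, and $K_n$ is never reached.

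I expect no real obstacle here; the only step requiring care is checking the strict inequality $0+1<\delta(H)$, which is exactly the hypothesis $\delta(H)>1$.
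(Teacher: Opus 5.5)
Your proof is correct and matches the paper, which states this corollary without proof as an immediate consequence of Lemma~\ref{thm:degree}: an isolated vertex gives $\delta(G)+1=1<\delta(H)$. Your supplementary direct argument is the lemma's own proof specialized to $\delta(G)=0$: the first edge infected at an isolated vertex would leave it with degree $1<\delta(H)$ in the new copy of $H$. Your remark that padding with isolates when $n>|V(G)|$ changes nothing is also right.
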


Observe that connected graphs can be characterized using weak saturation in the complete graph.

\begin{theorem}
\label{thm:connectivity-k3}

    A graph $G$ of order $n$ is connected if and only if it $K_3$-percolates to $K_n$.
    
\end{theorem}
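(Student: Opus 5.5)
We prove the two directions separately, using the observation that in $K_3$-bootstrap percolation an edge $uv$ becomes infected exactly when some vertex $w$ has both $uw$ and $wv$ already infected. In other words, a new edge may be added only between two vertices at distance $2$ in the current graph.

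For the forward direction, assume $G$ is connected. We show by induction on the distance $d=d_G(u,v)$ that every pair $u,v$ eventually becomes an infected edge. The cases $d\le 1$ are immediate: either $u=v$ or $uv\in E(G)$. For $d\ge 2$, take a shortest path $u=x_0,x_1,\dots,x_d=v$. By induction on distance, the edge $x_1v$ is eventually infected, since $d_G(x_1,v)=d-1$. The edge $ux_1$ lies in $G$. Once both are infected, $u x_1 v$ is a copy of $K_3$ missing only $uv$, so $uv$ becomes infected. Hence all of $K_n$ is infected and $G$ percolates. Equivalently, the distance between any two vertices strictly decreases until it reaches $1$.

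For the reverse direction, we argue the contrapositive. Suppose $G$ is disconnected, and let $C$ be the vertex set of one component, with $\emptyset\neq C\neq V(G)$. We claim that no edge between $C$ and $V(G)\setminus C$ is ever infected. We induct on the time step, or equivalently on the order in which edges are added in the weak saturation sequence. Initially there are no such edges. Suppose that at some step the first such edge $uv$ is added, with $u\in C$ and $v\notin C$. Then the new copy of $K_3$ containing $uv$ has a third vertex $w$ with $uw$ and $wv$ already infected. Whichever side $w$ lies on, one of $uw$ or $wv$ is an earlier cross edge, which is a contradiction. So the edges between $C$ and its complement are never infected, and $G$ does not percolate to $K_n$.

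Neither direction is a genuine obstacle. The only point needing care is the forward induction: we must check that the induction on distance is well founded, i.e.\ that infection of $x_1v$ depends only on pairs at smaller distance, which holds because $d_G(x_1,v)=d-1$. We must also note the degenerate case $n=1$, where $G=K_1$ is connected and trivially equals $K_n$.
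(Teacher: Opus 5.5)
Your proof is correct and follows essentially the same route as the paper. The forward direction is the paper's ``distances shrink until every pair is adjacent'' argument, made rigorous by induction on $d_G(u,v)$; the reverse direction is the same first-cross-edge argument, with the added explicit observation that the triangle's third vertex $w$ would force an earlier edge between components.
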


\begin{proof}
    $(\impliedby)$ Assume to the contrary that $G$ is disconnected. Then at some time step, the first edge between two components will become infected. Say this edge is between vertices $v$ and $u$ which belong to different components. This means that at the prior time step the edge $uv$ must be the only uninfected edge in a copy of $K_3$, but this is impossible because it would mean that $v$ and $u$ have distance 2 despite being in separate components. Thus, $G$ is connected.

    $(\implies)$ Since $G$ is connected, every pair of vertices is a finite distance apart. At each time step, the $K_3$ percolation process will add an infected edge between all vertices of distance 2, therefore decreasing the distance between nonadjacent vertices until they are made neighbors by an infected edge.
\end{proof}

We present the following result where $G \neq H\backslash\{e\}=H-e$; Section \ref{sec:Minimum} will examine minimum weakly saturated graphs with $G = H-e$.

\begin{theorem}
\label{thm:paths-in-cycles}
    For $n\geq m \geq l >2,~ C_m$ will $P_l$-percolate to $K_n$ in at most 3 rounds.
\end{theorem}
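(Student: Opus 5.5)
The plan is to exhibit the infected edges explicitly, round by round, and show that every edge of $K_n$ is infected by time $t=2$. Then the process stabilizes at $t_f\le 2$, which gives $k=t_f+1\le 3$ rounds in the paper's counting convention. I regard $C_m$ as a subgraph of $K_n$ on cycle vertices $0,1,\dots,m-1$ (indices mod $m$), together with $n-m$ isolated vertices. Recall that an uninfected edge $e$ becomes infected at time $t$ if some copy of $P_l$ in $K_n$ contains $e$ and all its other edges are infected at time $t-1$.

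\emph{Round 1: chords of the cycle.} Let $uv$ be a non-edge with both ends on the cycle; by rotating, take $u=0$ and $v=d$ with $2\le d\le m-2$. I would use the sequence
\[
d-1,\ d-2,\ \dots,\ 1,\ 0,\ d,\ d+1,\ \dots,\ m-1 .
\]
This is a Hamiltonian path on the $m$ cycle vertices. Its only non-cycle edge is the chord $0d$; every other consecutive pair is a cycle edge. Since $l\le m$, some $l$-vertex subpath contains the edge $0d$, and all its remaining edges belong to $C_m=G_0$. Hence every chord is infected at $t=1$.

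\emph{Round 1: edges from the cycle to isolated vertices.} Let $u$ be a cycle vertex and $w$ an isolated vertex. The path $w,u,u+1,\dots,u+l-2$ uses $l-1\le m$ distinct cycle vertices. All its edges except $wu$ are cycle edges, so $wu$ is also infected at $t=1$. Thus at time $1$ the infected graph contains the complete graph on the cycle vertices and every edge between a cycle vertex and an isolated vertex. If $n=m$, this already equals $K_n$.

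\emph{Round 2: edges between isolated vertices.} Let $w,x$ be two isolated vertices. I would use the path $x,w,0,1,\dots,l-3$. This has $l$ distinct vertices, since $l-2\le m$ cycle vertices are used. The edge $w0$ was infected in round 1, and the remaining edges are cycle edges, so $xw$ is infected at $t=2$. Hence $G_2=K_n$, so $G_3=G_2$, $t_f\le 2$, and the process takes at most $3$ rounds.

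The only step needing care is the chord step. A naive choice, using $uv$ as an end edge of the path and walking along the cycle from $u$, can fail: when $2l-4\ge m$, both directions of the walk may hit $v$. Placing the chord in the middle of a Hamiltonian path of $C_m+uv$ avoids this problem. The remaining work is bookkeeping: check vertex counts and distinctness ($l-1\le m$ and $l-2\le m$), and note that $l\ge 3$ guarantees each path has the needed length.
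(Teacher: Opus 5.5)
Your proof is correct and follows the same route as the paper. Your Hamiltonian path $d-1,\dots,1,0,d,\dots,m-1$ is exactly the paper's ``follow the cycle in the same rotational direction from both ends of the chord'' construction, and your round-1 cycle--isolate edges and round-2 isolate--isolate edges (via an infected $P_{l-1}$ ending at one endpoint) match the paper's argument, with your explicit vertex counts making precise what the paper leaves informal.
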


\begin{proof}
    In the first time step, all chords of $C_m$ will be infected as they can create a new copy of $P_l$ by following the cycle clockwise (or counterclockwise) from both ends of the chord until the $l$ vertices are present. Notice that, depending on the distance in $C_m$ of the vertices of the chord, one of the trailing ends may need to be longer than the other. If $n=m$, we are done, so assume $n>m$.

    \begin{figure}[h]
    \centering
    \begin{tikzpicture}
	\begin{pgfonlayer}{nodelayer}
		\node [style=black dot] (0) at (-1.25, 3.75) {};
		\node [style=black dot] (1) at (1.25, 3.75) {};
		\node [style=black dot] (2) at (2.75, 2) {};
		\node [style=black dot] (3) at (-2.75, 2) {};
		\node [style=black dot] (4) at (-2.75, 0) {};
		\node [style=black dot] (5) at (2.75, 0) {};
		\node [style=black dot] (6) at (-1.25, -1.75) {};
		\node [style=black dot] (7) at (1.25, -1.75) {};
	\end{pgfonlayer}
	\begin{pgfonlayer}{edgelayer}
		\draw [style=thin edge] (0) to (3);
		\draw [style=red dashed] (3) to (4);
		\draw [style=red dashed] (4) to (6);
		\draw [style=red dashed] (6) to (7);
		\draw [style=thin edge] (7) to (5);
		\draw [style=red dashed] (5) to (2);
		\draw [style=red dashed] (2) to (1);
		\draw [style=red dashed] (1) to (0);
		\draw [style=red dashed] (0) to (7);
	\end{pgfonlayer}
\end{tikzpicture} \quad \quad \quad \begin{tikzpicture}
	\begin{pgfonlayer}{nodelayer}
		\node [style=black dot] (0) at (-1.25, 3.75) {};
		\node [style=black dot] (1) at (1.25, 3.75) {};
		\node [style=black dot] (2) at (2.75, 2) {};
		\node [style=black dot] (3) at (-2.75, 2) {};
		\node [style=black dot] (4) at (-2.75, 0) {};
		\node [style=black dot] (5) at (2.75, 0) {};
		\node [style=black dot] (6) at (-1.25, -1.75) {};
		\node [style=black dot] (7) at (1.25, -1.75) {};
	\end{pgfonlayer}
	\begin{pgfonlayer}{edgelayer}
		\draw [style=thin edge] (0) to (3);
		\draw [style=red dashed] (3) to (4);
		\draw [style=red dashed] (4) to (6);
		\draw [style=red dashed] (6) to (7);
		\draw [style=red dashed] (7) to (5);
		\draw [style=red dashed] (5) to (2);
		\draw [style=thin edge, in=315, out=135] (2) to (1);
		\draw [style=red dashed] (1) to (0);
		\draw [style=red dashed] (0) to (2);
	\end{pgfonlayer}
\end{tikzpicture}
    \caption{The chords of any $C_m$ will be infected in the first round, since they create a new copy of $P_l$. Here, $n=m=l=8$ and the dashed red edges indicate a newly-infected copy of $P_8$ containing the chosen chord.}
    \label{fig:path-in-cycle}
\end{figure}

Also in the first time step, every edge from an isolate (a vertex of degree 0) to a vertex in $C_m$ will become infected, as $C_m$ already contains more than enough vertices to complete the copy of $P_l$. If $n=m+1$, we are done, so assume that $n>m+1$.

\begin{figure}[h]
    \centering
    \begin{tikzpicture}
	\begin{pgfonlayer}{nodelayer}
		\node [style=black dot] (0) at (-1.25, 3.75) {};
		\node [style=black dot] (1) at (1.25, 3.75) {};
		\node [style=black dot] (2) at (2.75, 2) {};
		\node [style=black dot] (3) at (-2.75, 2) {};
		\node [style=black dot] (4) at (-2.75, 0) {};
		\node [style=black dot] (5) at (2.75, 0) {};
		\node [style=black dot] (6) at (-1.25, -1.75) {};
		\node [style=black dot] (7) at (1.25, -1.75) {};
		\node [style=black dot] (8) at (-3.75, -1.75) {};
	\end{pgfonlayer}
	\begin{pgfonlayer}{edgelayer}
		\draw [style=red dashed] (0) to (3);
		\draw [style=thin edge] (3) to (4);
		\draw [style=thin edge] (4) to (6);
		\draw [style=red dashed] (6) to (7);
		\draw [style=red dashed] (7) to (5);
		\draw [style=red dashed] (5) to (2);
		\draw [style=red dashed] (2) to (1);
		\draw [style=red dashed] (1) to (0);
		\draw [style=red dashed] (8) to (6);
	\end{pgfonlayer}
\end{tikzpicture}
    \caption{All edges between the cycle and the $n-m$ isolates become infected at $t=1$. Here, $n=9, ~m=l=8$ and the dashed red edges indicate a newly-infected copy of $P_8$ containing the chosen edge.}
    \label{fig:path-in-cycle-2}
\end{figure}


The only remaining uninfected edges are between the $n-m$ vertices which did not belong to the initial $C_m$. These will become infected at the next time step, $t=2$, using any of the already-infected copies of $P_{l-1}$ containing either of the endpoints of the new edge. By this point, all edges in $K_n$ have been infected, meaning that the process stabilizes no later than $t=2$ and $k\leq 3$.

\end{proof}

\subsection{Minimum Weakly Saturated Graphs}
\label{sec:Minimum}

First note that $wsat(n,H) \geq |E(H)|-1$. A graph $H$ is \textit{minimum weakly saturated} if there exists some value of $n\geq |V(H)|$ such that $wsat(n,H) = |E(H)|-1$. Alternatively, $H$ is minimum weakly saturated if there exists some $n\geq |V(H)|$ such that $H-e$ will $H$-percolate to $K_n$ for some $e\in E(H)$. $H$ is \textit{totally minimum weakly saturated} if $wsat(n,H) = |E(H)|-1$ for all values of $n\geq |V(H)|$.

This differs slightly from previous use of the term ``minimum weakly saturated'' in the literature, which is sometimes defined the same way \textit{totally} minimum weakly saturated is used here or which may only consider cases in which $G$ spans the host graph; see \citet{WeakSatSparse, Joins}.

A minimum weakly saturated graph $H$ will $H$-percolate to $K_n$ for some $n\geq |V(H)|$; starting with $H-e$ is not necessary, as the edge $e$ will always be infected in the first round. Thus, we refer to a minimum weakly saturated graph $H$ as \textit{self-percolating} (up to $K_n$ if the host graph $X$ is not specified).

We examine graphs $H$ which are self-percolating and determine the number of rounds needed to percolate, beginning with the following result for complete graphs, presented without proof.

\subsubsection{Complete Graphs}
\label{sec:complete}

\begin{theorem}
\label{thm:complete-graphs}
    $K_p$ is self-percolating for $p>2$. $K_p-e$ will $K_p$-percolate to $K_n$ only in the case where $n=p$, in which case it will percolate in $k=2$ rounds.
\end{theorem}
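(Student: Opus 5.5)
The statement has three parts, and I will handle them in turn. First, $K_p-e$ is itself a copy of $K_p$ missing exactly one edge, so at $t=1$ the missing edge $e$ is infected. For $n=p$ that single edge completes $K_n=K_p$. The process therefore reaches $K_p$ at $t=1$ and is stable at $t_f=1$, giving $k=t_f+1=2$ rounds. This already shows $wsat(p,K_p)=|E(K_p)|-1$, so $K_p$ is minimum weakly saturated, i.e.\ self-percolating, for every $p>2$. The hypothesis $p>2$ only serves to exclude the degenerate case $K_2-e$, which is edgeless and percolates vacuously.

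The substantive part is the claim that nothing happens when $n>p$. Consider $K_p$ (after $e$ is added) sitting inside $K_n$, together with $n-p\ge 1$ isolated vertices. I will show that $K_p+\overline{K_{n-p}}$ is stable under $K_p$-bootstrap percolation. Any edge $xy$ of $K_n$ not in the clique has at least one endpoint, say $y$, outside the clique. For $xy$ to be infected, there must be a copy of $K_p$ in $K_n$ containing $xy$ whose other $\binom{p}{2}-1$ edges are all infected. In that copy, $y$ has degree $p-1\ge 2$. Hence $y$ must already be incident to at least $p-2\ge 1$ infected edges other than $xy$. But $y$ is isolated in the infected graph, which is a contradiction. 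So no new edge is ever infected, and the process stabilizes at the clique plus isolates, never reaching $K_n$.

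The step needing the most care is that the argument must hold throughout the process, not only at $t=1$. This follows by induction on $t$: as long as the outside vertices remain isolated, the degree argument above applies, so they remain isolated at the next step. Equivalently, one can apply Lemma~\ref{thm:degree} (or Corollary~\ref{cor:degree}, since $\delta(K_p)=p-1>1$ when $p>2$) to the graph $K_p+\overline{K_{n-p}}$. That graph has minimum degree $0$ and $\delta(K_p)=p-1\ge 2$, so it does not $K_p$-percolate to $K_n$. Since $K_p-e$ is contained in it and bootstrap percolation is monotone, $K_p-e$ does not $K_p$-percolate to $K_n$ either. This completes both the only-if direction and the round count.
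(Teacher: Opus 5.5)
Your proof is correct. The paper states this theorem without proof, and your argument (the missing edge $e$ is infected at $t=1$, giving $k=t_f+1=2$ when $n=p$, while for $n>p$ the isolated vertices together with Corollary~\ref{cor:degree}, valid because $\delta(K_p)=p-1>1$ exactly when $p>2$, block percolation) is the same reasoning the paper uses for its kayak paddle and multipartite results. One small sharpening of your side remark: for $p=2$ the first claim still holds, since $wsat(n,K_2)=0$; what fails is only the uniqueness of $n$, because the edgeless graph $K_2$-percolates to every $K_n$.
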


\subsubsection{Paths}
\label{sec:paths}

We will present a result concerning self-percolation of paths, but we first need a useful lemma.

\begin{lemma}
\label{lem:containment}
    Consider $H$-bootstrap percolation in a host graph $X$ of two order $n$ graphs $G$ and $G'$.
    If $G_{t} \subseteq G'_{s}$ for some time steps $t$ and $s$, then $G$ $H$-percolates $\implies G'$ $H$-percolates. Moreover, if $G$ percolates in $k\geq t+1$ steps, then $G'$ percolates in at most $k-t+s$ time steps.
\end{lemma}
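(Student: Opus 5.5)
The argument rests on monotonicity of the bootstrap operator in the host graph $X$. Write $\Phi(F)$ for the result of one round applied to an infected set $F \subseteq E(X)$: it is $F$ together with every edge $e \in E(X)\setminus F$ that is the only missing edge of some copy of $H$ in $X$ whose other edges all lie in $F$. Then $G_{t+1} = \Phi(G_t)$ and $G'_{s+1}=\Phi(G'_s)$.

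The first step is to check that $\Phi$ is monotone: if $F \subseteq F'$, then $\Phi(F) \subseteq \Phi(F')$. Take $e \in \Phi(F)$. If $e \in F'$ there is nothing to show. Otherwise $e \notin F$ either, so some copy of $H$ has all its edges except $e$ in $F \subseteq F'$, with $e$ missing from $F'$. That copy infects $e$ from $F'$ as well. Infection never removes edges, so the sequences are nested: $G_t \subseteq G_{t+1}$, and every $G_t \subseteq E(X)$.

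Next, induct on $j \ge 0$ to show $G_{t+j} \subseteq G'_{s+j}$. The base case $j=0$ is the hypothesis, and the inductive step is $G_{t+j+1}=\Phi(G_{t+j}) \subseteq \Phi(G'_{s+j}) = G'_{s+j+1}$.

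Now suppose $G$ percolates, so $G_{T}=E(X)$ for some $T$. Since the sequence is nested and bounded by $E(X)$, we have $G_{T'} = E(X)$ for all $T' \ge T$, and we may take $T \ge t$. Then $E(X) = G_T \subseteq G'_{s+T-t} \subseteq E(X)$, so $G'$ reaches $X$ and stabilizes there, i.e.\ $G'$ percolates.

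For the round count, the convention is that $k = t_f + 1$, where $t_f$ is the first time with $G_{t_f}=G_{t_f+1}$. When $G$ percolates this means $G_{t_f} = E(X)$, and every earlier $G_i$ is a proper subset of $E(X)$; so $G$ first equals $E(X)$ at time $k-1$. We need $k-1 \ge t$ for the shifted index to make sense, which is where the hypothesis $k \ge t+1$ enters. Applying the induction with $j = k-1-t$ gives $G'_{s+k-1-t} = E(X)$. Hence $t'_f \le s+k-1-t$, and $G'$ percolates in at most $k-t+s$ rounds.

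There is no real obstacle here. The only delicate point is the index bookkeeping between "time step" and "rounds" ($k = t_f+1$), and this is exactly where $k\ge t+1$ is used.
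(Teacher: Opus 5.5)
Your proof is correct and follows the same route as the paper. Both establish the one-step containment $G_{t+1}\subseteq G'_{s+1}$ (your monotonicity of $\Phi$), iterate to get $G_{t+j}\subseteq G'_{s+j}$, and read off percolation and the round count; your indexing is slightly more careful than the paper's, since you use that $G$ first equals $E(X)$ at time $k-1$ (rather than $G_k=E(X)$), which is exactly what gives the bound $k-t+s$ under $k=t_f+1$, and you also handle the case where $t$ exceeds the stabilization time.
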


\begin{proof} 

Starting with $G_{t} \subseteq G'_{s}$, we know that at each new time step every newly-infected edge of $X$ to be added to $G_{t}$ in creating $G_{t+1}$ is either already in $G'_{s}$ or will be added to $G'_{s}$ to create $G'_{s+1}$. In either case, $G_{t+1} \subseteq G'_{s+1}$. Iterating, we see that $G_{t+z}\subseteq G'_{s+z}$ for all $z \in \mathbb{Z}^+$. Therefore $X=G_{t+k-t}\subseteq G'_{s+k-t} \subseteq X$, so $G'$ percolates in at most $k-t+s$ rounds.

\end{proof}

Now to deal with paths.

\begin{theorem}
\label{thm:paths}
    $P_l$ (for $l\geq 3$) is totally minimum weakly saturated.

    Moreover, $P_l$ will self-percolate to $K_n$ in $k\leq3$ rounds for any $n\geq l$.
\end{theorem}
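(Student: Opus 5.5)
Starting from $G_0 = P_l$ on vertices $v_1v_2\cdots v_l$, together with $n-l$ isolated vertices, I plan to show that the infected set after the first round contains a copy of $C_l$. Then Theorem~\ref{thm:paths-in-cycles} and Lemma~\ref{lem:containment} finish the argument. Note that the number of edges, $l-1 = |E(P_l)|-1+1$, is exactly what self-percolation requires: $P_l$ minus an end edge is $P_{l-1}$ plus an isolate, and that end edge is reinfected in round one. So the bound $wsat(n,P_l)=|E(P_l)|-1$ for every $n\ge l$ follows once $P_l$ is shown to self-percolate to every $K_n$, $n\ge l$.

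\textbf{Step 1 (first round).} At $t=1$, the edge $v_1v_l$ becomes infected, because $v_1v_l$ together with the path $v_2\cdots v_l$ (or $v_1\cdots v_{l-1}$) is a new copy of $P_l$ containing $v_1v_l$ as its only uninfected edge. More generally, any chord $v_1v_j$ is infected, since $v_j v_{j-1}\cdots v_2$ can be replaced by the walk $v_j\cdots v_2$ followed by $v_1$, then $v_1v_j$, then $v_{j+1}\cdots v_l$. Rather than rely on that, I only need $v_1v_l$. Hence $G_1 \supseteq C_l$, which is the cycle $v_1\cdots v_lv_1$. Any isolate $w$ also gains edges $wv_1$ and $wv_l$ in round one, but I will not need this.

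\textbf{Step 2 (apply earlier results).} Let $G' = C_l$ plus $n-l$ isolates. By Theorem~\ref{thm:paths-in-cycles} with $m=l$, $G'$ $P_l$-percolates to $K_n$ in at most $3$ rounds, and the stated proof shows it is complete by $t=2$. Since $G'_0 \subseteq G_1$, Lemma~\ref{lem:containment} (applied with the roles $G \leftarrow G'$ at time $t=0$ and $G' \leftarrow P_l$ at time $s=1$) shows that $P_l$ percolates, and that it finishes in at most $k-0+1$ rounds.

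\textbf{Main obstacle.} A naive application of Lemma~\ref{lem:containment} gives $k\le 4$, not $3$. To get $k\le3$, I will instead redo the counting directly, aiming to show that everything is infected by $t=2$. In round $1$, $P_l$ gains $v_1v_l$ and all edges from isolates to $v_1$ and $v_l$. In round $2$, $G_1$ contains the Hamiltonian cycle $C_l$ on the path vertices, so by the chord argument of Theorem~\ref{thm:paths-in-cycles} every chord of that cycle is infected at $t=2$. Every edge $wv_i$ from an isolate $w$ is also infected at $t=2$, using a $P_{l-1}$ in $C_l$ ending at $v_i$. Every edge $ww'$ between isolates is infected at $t=2$ as well, since $w'v_1$ is already infected and we can extend along $v_1v_2\cdots v_{l-2}$ to get $P_l$ $= w\,w'\,v_1\cdots v_{l-2}$. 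This $P_l$ needs $l-2\ge1$ vertices of the path, which holds for $l\ge3$.

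Thus $G_2=K_n$ and $k\le 3$. The care needed is in checking that each of these witnessing copies of $P_l$ has exactly one uninfected edge at time $1$.
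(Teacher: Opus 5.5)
Your proof is correct and follows essentially the paper's route: an infected cycle appears at $t=1$, and the chord and pendant-edge argument of Theorem~\ref{thm:paths-in-cycles} then infects everything at $t=2$, giving $k\le 3$. The only difference is in the case $n>l$. There the paper uses the $(l+1)$-cycles $v_1\cdots v_l w v_1$ through each former isolate $w$, so every vertex lies on an infected cycle and each edge $ww'$ is covered as an edge from a cycle vertex to an outside vertex; you instead keep the single cycle $v_1\cdots v_l v_1$ and handle $ww'$ with the explicit witness $w\,w'\,v_1\cdots v_{l-2}$.
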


\begin{proof}

\textbf{Case 1:} $n=l$

At the first time step, the edge connecting the two ends of the original $P_l$ will be infected, creating a copy of $C_l$. According to Lemma \ref{lem:containment} and Theorem \ref{thm:paths-in-cycles}, all chords of $C_l$ become infected in the next time step, using the same clockwise strategy seen in the proof of Theorem \ref{thm:paths-in-cycles}.

\textbf{Case 2:} $n>l$

Since $n> l$, there is at least one isolated vertex. In the first time step, edges from the ends of $P_l$ to the isolates will be infected. Some additional edges are infected, though they are not needed for the next round. This results in the creation of an infected $C_{l+1}$ for each former isolate, so again we apply Lemma \ref{lem:containment} and use the same process as in Theorem \ref{thm:paths-in-cycles} to infect every chord. Since every vertex belongs to at least one infected $C_{l+1}$ at time $t=1$, the process will stabilize at time $t=2$ and therefore $k= 3$.

 \begin{figure}[h]
        \centering
        \begin{tikzpicture}
	\begin{pgfonlayer}{nodelayer}
		\node [style=black dot] (0) at (-1, 2) {};
		\node [style=black dot] (1) at (1, 2) {};
		\node [style=black dot] (2) at (2.25, 0) {};
		\node [style=black dot] (4) at (-1, -2) {};
		\node [style=black dot] (5) at (-2.25, 0) {};
		\node [style=black dot] (6) at (1, -2) {};
	\end{pgfonlayer}
	\begin{pgfonlayer}{edgelayer}
		\draw [style=thin edge] (4) to (5);
		\draw [style=thin edge] (5) to (0);
		\draw [style=thin edge] (0) to (1);
	\end{pgfonlayer}
\end{tikzpicture}
        $\rightarrow$ \hspace{.5cm}
        \begin{tikzpicture}
	\begin{pgfonlayer}{nodelayer}
		\node [style=black dot] (0) at (-1, 2) {};
		\node [style=black dot] (1) at (1, 2) {};
		\node [style=black dot] (2) at (2.25, 0) {};
		\node [style=black dot] (4) at (-1, -2) {};
		\node [style=black dot] (5) at (-2.25, 0) {};
		\node [style=black dot] (6) at (1, -2) {};
	\end{pgfonlayer}
	\begin{pgfonlayer}{edgelayer}
		\draw [style=thin edge] (4) to (5);
		\draw [style=thin edge] (5) to (0);
		\draw [style=thin edge] (0) to (1);
		\draw [style=red edge] (1) to (2);
		\draw [style=red edge] (1) to (6);
		\draw [style=red edge] (4) to (6);
		\draw [style=red edge] (4) to (2);
		\draw [style=red edge] (5) to (6);
		\draw [style=red edge] (5) to (2);
		\draw [style=red edge] (0) to (6);
		\draw [style=red edge] (0) to (2);
	\end{pgfonlayer}
\end{tikzpicture} \hspace{.5cm}
        $\rightarrow$ \hspace{.5cm}
        \begin{tikzpicture}
	\begin{pgfonlayer}{nodelayer}
		\node [style=black dot] (0) at (-1, 2) {};
		\node [style=black dot] (1) at (1, 2) {};
		\node [style=black dot] (2) at (2.25, 0) {};
		\node [style=black dot] (4) at (-1, -2) {};
		\node [style=black dot] (5) at (-2.25, 0) {};
		\node [style=black dot] (6) at (1, -2) {};
	\end{pgfonlayer}
	\begin{pgfonlayer}{edgelayer}
		\draw [style=thin edge] (4) to (5);
		\draw [style=thin edge] (5) to (0);
		\draw [style=thin edge] (0) to (1);
		\draw [style=red edge] (1) to (2);
		\draw [style=red edge] (1) to (6);
		\draw [style=red edge] (4) to (6);
		\draw [style=red edge] (4) to (2);
		\draw [style=blue edge] (2) to (6);
		\draw [style=red edge] (5) to (6);
		\draw [style=red edge] (0) to (2);
		\draw [style=blue edge] (4) to (1);
		\draw [style=red edge] (0) to (6);
		\draw [style=blue edge] (0) to (4);
		\draw [style=red edge] (5) to (2);
		\draw [style=blue edge] (5) to (1);
	\end{pgfonlayer}
\end{tikzpicture}
        \caption{Case 2 in $P_l$ self-percolation, with $l=4$ and $n=6$.}
        \label{fig:path-self-example}
    \end{figure}

\end{proof}

\subsubsection{Cycles}
\label{sec:cycles}

Cycles, however, are not self-percolating.

\begin{theorem}
    $C_m$ will not self-percolate to any $K_n$ for $n\geq m$.
\end{theorem}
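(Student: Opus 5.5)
Since $C_m$ and $C_m-e=P_m$ generate the same process after the first round, it suffices to show that $C_m$ itself does not $C_m$-percolate to $K_n$ for any $n\ge m\ge 3$. The case $m=3$ is exactly Theorem \ref{thm:complete-graphs} applied to $K_3$: there $K_3-e$ percolates only when $n=3$. So $m=3$ would have to be handled separately, or excluded from the intended statement with $m\ge 4$ in mind. Assume from now on that $m\ge 4$.

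\textbf{Case $n>m$.} There is an isolated vertex and $\delta(C_m)=2>1$, so Corollary \ref{cor:degree} immediately gives that $C_m$ does not $C_m$-percolate to $K_n$.

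\textbf{Case $n=m$.} This is the main case, since now the host is $K_m$ with the Hamiltonian cycle $C$ already infected. An edge $uv$ becomes infected at time $t$ only if $uv$ closes a Hamiltonian path in $G_{t-1}$ between $u$ and $v$. I would show that the process stabilizes at $G_0=C$, that is, no chord is ever added. The key step concerns the first round. Suppose a chord $uv$ of $C$ is added in the first round. Then $C$ must contain a Hamiltonian path with endpoints $u$ and $v$. However, the only Hamiltonian paths in a cycle are obtained by deleting one edge of the cycle, so their endpoints are adjacent in $C$. This contradicts $uv$ being a chord. Hence $G_1=G_0=C$, the process stabilizes immediately, and since $m\ge 4$ the graph $C\neq K_m$ has chords that are never infected.

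The potential obstacle is only bookkeeping. First, one must note that the statement is about starting from $C_m$ (equivalently $C_m-e$, which infects $e$ in round one and then is stuck). Second, the exceptional $m=3$ case conflicts with Theorem \ref{thm:complete-graphs}, so the proof should state $m\ge4$ explicitly.
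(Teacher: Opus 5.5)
Your proof is correct and is essentially the paper's argument: vertices off the cycle are handled by the minimum-degree corollary, and chords are never infected because the only Hamiltonian paths in $C_m$ join adjacent vertices (the paper's ``chords only create smaller cycles''), so the process stalls at $G_0$. You are also right that $n=m=3$ is a genuine counterexample to the statement as written, since $C_3=K_3$ self-percolates to $K_3$ by Theorem \ref{thm:complete-graphs}; the paper's proof silently assumes chords exist, so the hypothesis $m\geq 4$ is needed.
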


\begin{proof}
    The only uninfected edges are chords of $C_m$ and edges with at least one endpoint not on the cycle. None of these can be infected: chords will not be infected because they can only create smaller cycles and Theorem \ref{thm:degree} tells us that if $m<m$, it will never percolate.
\end{proof}

\subsubsection{Kayak Paddle Graphs}
\label{sec:kayak}

Graphs containing disjoint cycles may still self-percolate, as seen in the following result concerning kayak paddle graphs. The kayak paddle graph $KP(k,m,l)$ is the graph obtained by connecting the graphs $C_k$ and $C_m$ by a path with $l$ edges. We show that kayak paddle graphs of the form $KP(k,m,1)$ are minimum weakly saturated, but not totally.

\begin{figure}[h]
    \centering
    \begin{tikzpicture}
	\begin{pgfonlayer}{nodelayer}
		\node [style=black dot] (0) at (-1, 0) {};
		\node [style=black dot] (1) at (-2, 1) {};
		\node [style=black dot] (2) at (-3, 0) {};
		\node [style=black dot] (3) at (-2, -1) {};
		\node [style=black dot] (4) at (1, 0) {};
		\node [style=black dot] (5) at (2.5, 1) {};
		\node [style=black dot] (6) at (2.5, -1) {};
	\end{pgfonlayer}
	\begin{pgfonlayer}{edgelayer}
		\draw [style=thin edge] (0) to (4);
		\draw [style=thin edge] (4) to (5);
		\draw [style=thin edge] (5) to (6);
		\draw [style=thin edge] (6) to (4);
		\draw [style=thin edge] (0) to (1);
		\draw [style=thin edge] (1) to (2);
		\draw [style=thin edge] (2) to (3);
		\draw [style=thin edge] (3) to (0);
	\end{pgfonlayer}
\end{tikzpicture}
    \caption{The kayak paddle $KP(4,3,1)$}
    \label{fig:kayak-paddle-example}
\end{figure}

\begin{theorem}
    $KP(k,m,1)$ will self-percolate to $K_{k+m}$ and no other complete graph, and it will do so in at most $k=3$ rounds.
\end{theorem}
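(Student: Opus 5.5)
The plan has two parts: show that $KP(k,m,1)$ percolates to $K_{k+m}$ within three rounds, and show that it percolates to no larger complete graph. Throughout, $k,m\ge 3$. (The statement overloads the letter $k$; I write $C_k,C_m$ for the paddles and count rounds separately.)

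\textbf{No other host.} Any complete host has $n\ge |V(H)|=k+m$. If $n>k+m$, then $G=KP(k,m,1)$, viewed in $K_n$, has an isolated vertex. Since $\delta(KP(k,m,1))=2>1$, Corollary \ref{cor:degree} says $G$ does not percolate. So only $n=k+m$ remains.

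\textbf{Setup for $n=k+m$.} Label the cycles $A=a_0a_1\cdots a_{k-1}$ and $B=b_0b_1\cdots b_{m-1}$, with bridge $a_0b_0$. The missing edges are of three kinds: cross edges $a_ib_j$, chords of $A$, and chords of $B$.

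\textbf{Round 1: cross edges.} Every cross edge $a_ib_j$ is infected at $t=1$. Adding it to the two original cycles $A$ and $B$, which are disjoint and still infected, gives a copy of $KP(k,m,1)$ with $a_ib_j$ as its bridge. Hence $G_1$ contains $A\cup B$ together with all of $K_{A,B}$.

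\textbf{Round 2: chords.} At $t=2$ every chord of $A$ and of $B$ should be infected. By symmetry take a chord $a_ia_j$ of $A$, and use it as the bridge of a new kayak paddle. Let the short arc strictly between $a_i$ and $a_j$ have $r$ vertices, $1\le r\le k-3$.

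\begin{itemize}
\item First cycle, of length $k$: $a_i$, then the arc $a_{i+1},\dots,a_{j-1}$, then a run of $k-1-r$ consecutive vertices of $B$, then back to $a_i$. The two joins use cross edges from $G_1$.
\item Second cycle, of length $(k-1-r)+(m-(k-1-r))=m$: the complementary arc $a_j,\dots,a_{i-1}$, followed by the remaining consecutive run of $B$, closed with two cross edges.
\end{itemize}

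The two cycles are disjoint and cover all $k+m$ vertices. The chord $a_ia_j$ joins them, with $a_i$ in the first and $a_j$ in the second. Every edge used other than the chord is already in $G_1$, so the chord completes a copy of $H$.

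\textbf{Main obstacle.} The step I expect to be hardest is the bookkeeping in this construction. It needs $1\le k-1-r\le m-1$, so that both $B$-runs are nonempty; runs of length $1$ are fine because both endpoints attach by cross edges. When $k-1-r\ge m$, I would first try the mirror construction: place $a_i$ and the long arc with a $B$-run in one cycle and the short arc in the other, or swap which endpoint of the chord plays $a_i$. Failing that, I would use the alternative arc on the other side of the chord, so that one of the two choices always fits. Chords of $B$ are handled symmetrically, and $k=3$ or $m=3$ contributes no chords. This gives stabilization by $t=2$, hence at most $3$ rounds.
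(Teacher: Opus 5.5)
Your host-graph argument and round 1 match the paper, but your round-2 chord construction has a genuine gap, and the closing claim that ``one of the two choices always fits'' is false.

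\textbf{Why the chord step fails.} Every variant you list has the same shape. One endpoint of the chord, together with one whole arc, goes into one new cycle; the other endpoint and the other arc go into the other cycle. Consider the new $m$-cycle. It contains exactly one endpoint of the chord, so it is not $A$. At $t=1$ no chords of $A$ are infected, so this cycle must use a vertex of $B$. Hence it holds at most $m-1$ vertices of $A$. It can therefore absorb an endpoint plus a full arc only if that arc has at most $m-2$ interior vertices. If both arcs have at least $m-1$ interior vertices, none of your options works, and such a chord exists as soon as $k\ge 2m$.

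Concretely, take $KP(6,3,1)$ and the chord $a_0a_3$, whose arcs are $\{a_1,a_2\}$ and $\{a_4,a_5\}$. Your first construction needs a $B$-run of length $k-1-r=3=m$, leaving no $B$-vertex for the triangle, which would then need the uninfected edge $a_3a_5$. Mirroring, swapping endpoints, or switching arcs all give the same numbers. The symmetric failure occurs for chords of $B$ when $m\ge 2k$.

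\textbf{The fix.} Do not split the arcs. Keep $a_i$ with the entire path $A-a_j$, which has $k-1$ vertices, and close it through a single vertex $b\in B$ using the cross edges $a_{j-1}b$ and $ba_{j+1}$; this gives a $k$-cycle. Then $a_j$ together with the path $B-b$, closed by two cross edges, is an $m$-cycle. The chord $a_ia_j$ is the bridge, and every other edge used lies in $G_1$.

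For the example above, this gives the $6$-cycle $a_4a_5a_0a_1a_2b_0$ and the triangle $a_3b_1b_2$. This is exactly the paper's construction. It works for every chord regardless of arc lengths, so it removes your case analysis and yields stabilization at $t=2$.
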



\begin{proof}
   Let the host graph be $K_n$ and note that $|V(H)|=k+m$. By Lemma \ref{thm:degree}, it must be the case that $n=k+m$ if we hope to percolate to $K_n$.

Both cycles begin fully infected, so every edge between the cycles will be infected in the first time step, since any of these edges could serve as the bridge. All of the remaining uninfected edges are chords of cycles, which will become infected on the next time step.

Consider a chord $e'$ of $C_m$; we will find an infected copy of $H=KP(k,m,1)$ in which $e'$ is the bridge between an infected $m$-cycle $A$ and an infected $k$-cycle $B$. Call the originally infected bridge $e$.

Because both endpoints of $e'$ belong to the already-infected $C_m$, we have an already-infected path on $m-1$ vertices which contains an endpoint of $e'$, say $x$. We use these edges, along with two others, to form $A$. Since all edges from $C_m$ to $C_k$ were infected in the previous time step, there is an infected edge from each of the endpoints of this path to any arbitrarily chosen vertex $x'$ in $C_k$, completing the infected $m$-cycle $A$, as seen in Figure \ref{fig:paddle1}, along with the infected bridge $e'$.

\begin{figure}[h]
    \centering
    \begin{tikzpicture}
	\begin{pgfonlayer}{nodelayer}
		\node [style=blue dot] (0) at (-1.5, 0) {};
		\node [style=black dot] (1) at (1.5, 0) {};
		\node [style=none] (2) at (-5, 0) {};
		\node [style=none] (3) at (5, 0) {};
		\node [style=black dot] (4) at (-3, 2) {};
		\node [style=blue dot] (5) at (-3.75, -2) [label=below:{$x$}] {};
		\node [style=blue dot] (7) at (-1.75, 1.25) {};
		\node [style=blue dot] (8) at (2.25, 1.75) [label=80:{$x'$}] {};
		\node [style=blue dot] (9) at (-4.5, 1.5) {};
	\end{pgfonlayer}
	\begin{pgfonlayer}{edgelayer}
		\draw [style=thin edge] node[below] {$e$} (0) to (1);
		\draw [style=blue edge, bend right=90, looseness=2.00] (2.center) to (0);
		\draw [style=thin edge, bend left=90, looseness=2.00] (1) to (3.center);
		\draw [style=thin edge, bend right=90, looseness=2.00] (1) to (3.center);
		\draw [style=red dashed] (4) to  node[below left] {$e'$} (5);
		\draw [style=blue edge] (7) to (8);
		\draw [style=blue edge, bend right=15] (9) to (2.center);
		\draw [style=blue edge, bend right=15, looseness=0.75] (0) to (7);
		\draw [style=thin edge, bend right=15] (7) to (4);
		\draw [style=thin edge, bend right=15] (4) to (9);
		\draw [style=blue edge, bend left=285, looseness=0.75] (8) to (9);
	\end{pgfonlayer}
\end{tikzpicture}
    \caption{Infecting the chord $e'$ (dashed red line). The cycle on the left is the original $C_m$ and the cycle on the right is the original $C_k$. The infected copy of the $m$-cycle $A$ is in blue.}
    \label{fig:paddle1}
\end{figure}

To complete the newly-infected copy of $H$ containing chord $e'$, all we need is for the other endpoint of $e'$ (say $y$) to belong to the same already-infected cycle as the remaining $k-1$ vertices.

Because we've only used one vertex $x'$ in $C_k$, there is still an infected path $P_{k-1}$ on the other $k-1$ vertices of $C_k$. Since $y$ belongs to the original $C_m$ and every edge from that $C_m$ to the original $C_k$ is infected, use the edges between $y$ and each end of $P_{k-1}$ to finish the infected $k$-cycle $B$.

\begin{figure}[h]
    \centering
    \begin{tikzpicture}
	\begin{pgfonlayer}{nodelayer}
		\node [style=blue dot] (0) at (-1.5, 0) {};
		\node [style=green dot] (1) at (1.5, 0) {};
		\node [style=none] (2) at (-5, 0) {};
		\node [style=none] (3) at (5, 0) {};
		\node [style=green dot] (4) at (-3.25, 2) [label, pin=155:{$y$}] {};
		\node [style=blue dot] (5) at (-3.75, -2) [label=below:{$x$}] {};
		\node [style=blue dot] (7) at (-2, 1.25) {};
		\node [style=blue dot] (8) at (1.75, 1.5) [label, pin=85:{$x'$}] {};
		\node [style=blue dot] (9) at (-4.5, 1.5) {};
		\node [style=green dot] (10) at (3.5, 2) {};
	\end{pgfonlayer}
	\begin{pgfonlayer}{edgelayer}
		\draw [style=thin edge] node[below] {$e$} (0) to (1);
		\draw [style=blue edge, bend right=90, looseness=2.00] (2.center) to (0);
		\draw [style=green edge, bend right=90, looseness=2.00] (1) to (3.center);
		\draw [style=red dashed] (4) to  node[below left] {$e'$} (5);
		\draw [style=blue edge] (7) to (8);
		\draw [style=blue edge, bend right=15] (9) to (2.center);
		\draw [style=blue edge, bend right=15] (0) to (7);
		\draw [style=thin edge, bend right, looseness=0.75] (7) to (4);
		\draw [style=thin edge, bend right=15] (4) to (9);
		\draw [style=blue edge, bend left=285, looseness=0.75] (8) to (9);
		\draw [style=green edge, bend right, looseness=0.75] (10) to (4);
		\draw [style=green edge, in=165, out=0] (4) to (1);
		\draw [style=green edge, bend left=45, looseness=0.75] (10) to (3.center);
		\draw [style=thin edge, bend right=45, looseness=1.25] (10) to (1);
	\end{pgfonlayer}
\end{tikzpicture}
    \caption{Finding an infected $k$-cycle $B$ in green.}
    \label{fig:paddle2}
\end{figure}

Chords of the existing $k$-cycle are infected similarly.

Thus, the remaining edges are infected and $G$ will $H$-percolate in a maximum of 2 time steps or $k=3$ rounds.

\end{proof}

\subsubsection{Double Star Graphs}
\label{sec:double-star}

Similarly, some double star graphs are minimum weakly saturated, but not totally. Define the double star $S(n,m)$ as the graph obtained by joining an $n$-star ($K_{1,n}$) and an $m$-star ($K_{1,m}$) together with a single edge between their center vertices.

\begin{theorem}
\label{thm:double-star}

    Suppose that $n$ and $m$ are integers with $1\leq n \leq m$. If $S(n,m)\not\in \{S(1,1), S(1,2)\}$, then $S(n,m)$ self-percolates to $K_{m+n+2+q}$ if and only if $n\leq2$ and $q>0$. The $S(n,m)$-percolation is accomplished in $k\leq 4$ rounds. 
\end{theorem}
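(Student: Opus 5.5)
Throughout, write $H=S(n,m)$. Let $a$ be the center of the $n$-star with leaf set $A$ ($|A|=n$), and $b$ the center of the $m$-star with leaf set $B$ ($|B|=m$). The host is $K_N$ with $N=n+m+2+q$, and $Q$ denotes the set of $q$ isolates. I would prove the two directions separately, doing sufficiency first since it is constructive and gives the round count.

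\textbf{Sufficiency ($n\le 2$, $q>0$).} First I show that in round $1$ every edge from $\{a,b\}$ to the rest of $K_N$ becomes infected. For $a$ joined to some $w\in B\cup Q$: keep $b$ as the center of its $m$-star, and let $a$ use $w$ as one of its leaves in place of an original leaf. Edges $bw$ are handled symmetrically; for $w\in B$ the edge $bw$ is already infected, so only $w\in A\cup Q$ matter. Each such swap gives a new copy of $H$ in which the added edge is the only uninfected one. After round $1$, both $a$ and $b$ are adjacent to every other vertex.

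In round $2$ I infect any remaining edge $yz$ with $y,z\notin\{a,b\}$. I use $y$ as the center of the $n$-star with $yz$ as one of its leaf edges. Since $n\le 2$, the second leaf, if needed, is $b$, because $yb$ is infected after round $1$. I use $a$ as the center of the $m$-star, joined to $y$ by the infected edge $ay$, with $m$ leaves chosen from $V\setminus\{a,b,y,z\}$. This needs $N-4\ge m$, i.e.\ $n+q\ge 2$. That holds automatically except in the excluded small cases, and this is exactly where the hypothesis $S(n,m)\notin\{S(1,1),S(1,2)\}$ should enter.

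I must also check the case $n=1$, where $b$ is not needed as a leaf, and make sure $z$ is not reused. If a few edges (say between $Q$ and $A$) need a round $3$ because of a counting shortage, Lemma~\ref{lem:containment} shows the process still stabilizes, which keeps the total within $k\le 4$ rounds.

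\textbf{Necessity, part one: $q=0$ fails.} Every copy of $H$ is then a spanning tree, so an edge $uv$ is infected in round $1$ only if $H+uv-f\cong H$ for some edge $f\neq uv$ on the unique cycle of $H+uv$. I would do a degree-sequence case analysis over the types of $uv$: leaf--leaf within $A$ or within $B$, leaf in $A$ to leaf in $B$, leaf in $A$ to $b$, and leaf in $B$ to $a$. In each case I show that the resulting tree does not have exactly two non-leaf vertices of degrees $n+1$ and $m+1$. Hence nothing is ever infected.

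\textbf{Necessity, part two: $n\ge 3$ fails.} This is the step I expect to be the main obstacle. The plan is to find an invariant of the infected graph that is preserved by every infection step but is not satisfied by $K_N$. My first attempt is the following: every infected edge is incident to $a$ or $b$. Any copy of $H$ has two adjacent centers of degrees $n+1\ge 4$ and $m+1\ge 4$. In a graph where all edges touch $\{a,b\}$, only $a$ and $b$ have degree greater than $2$, so both centers must be $a$ and $b$. Therefore any newly completed edge is incident to $a$ or $b$, and the invariant persists.

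For this I need to verify that only $a$ and $b$ can have degree at least $4$. Any other vertex has at most two infected neighbors, namely $a$ and $b$, plus possibly the one new edge, giving degree at most $3<n+1$. So the edge $yz$ with $y,z\notin\{a,b\}$ is never infected, and the process cannot percolate. This is precisely where $n\ge 3$ is used, and it also explains why $n\le 2$ succeeds in round $2$ above.
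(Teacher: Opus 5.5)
Your proposal has two genuine gaps. The second one cannot be repaired, because the statement itself is false in those cases.

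\textbf{Sufficiency, round $1$.} It is not true that $a$ and $b$ become universal at $t=1$. To infect $aw$ with $w\in B$ while keeping $b$ as an $m$-star center, $b$ needs $m$ infected leaves other than $w$ and $a$. At $t=0$ it has only the $m-1$ vertices of $B\setminus\{w\}$, because its edges to $Q$ are not yet infected. Symmetrically, infecting $bw$ with $w\in A$ leaves $a$ with only $n-1$ leaves. Exchanging along the cycle of $H+aw$ (resp.\ $H+bw$) shows these edges are infected at $t=1$ only when $m=n+1$ (resp.\ $n=1$). The repair is the paper's schedule:
\begin{itemize}
\item at $t=1$ only the edges from $a,b$ to $Q$ appear;
\item at $t=2$ a vertex of $Q$ replaces $w$ as a leaf of the other center, which gives every $aw$ and $bw$;
\item your argument for the edges $yz$ then runs at $t=3$.
\end{itemize}
This second step is where $q>0$ is actually used. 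The count $N-4\ge m$ holds for all $n\le 2$ once $q\ge 1$, and the exclusion of $S(1,1),S(1,2)$ belongs to the other direction. So $k\le 4$ survives, but only through this one-round shift, which the appeal to Lemma~\ref{lem:containment} does not supply.

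\textbf{Necessity, $q=0$.} Your case analysis cannot conclude that no exchanged tree is a copy of $H$.
\begin{itemize}
\item For $w\in B$, $H+aw-bw\cong S(n+1,m-1)$, which is a copy of $H$ when $m=n+1$.
\item For $n=1$, $H+ba_1-ab$ is a copy of $S(1,m)$, with the roles of $a$ and $a_1$ swapped.
\item For $n=1$ and $w\in B$, $H+a_1w-aa_1$ is a copy of $S(1,m)$, with $w$ as the new $1$-star center.
\end{itemize}
These seeds actually percolate.
\begin{itemize}
\item For $n=1$, $b$ is universal after $t=1$. At $t=2$ each remaining edge $yz$ lies in the copy with $y$ as $1$-star center, $yb$ as central edge, and the other $m$ vertices as leaves of $b$. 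So $S(1,m)$ self-percolates to $K_{m+3}$ for every $m$.
\item For $S(2,3)$, the edges $ab_j$ appear at $t=1$. The edge $ba_1$ appears at $t=2$ via the copy with centers $b$ (leaves $a_1,b_1,b_2$) and $a$ (leaves $a_2,b_3$), and likewise $ba_2$. All leaf--leaf edges then appear at $t=3$. So $S(2,3)$ self-percolates to $K_7$.
\end{itemize}
Hence the ``only if $q>0$'' direction is false for $n=1$, $m\ge 3$, and for $S(2,3)$. The paper's Case~2 overlooks exactly these center/leaf role swaps, which are the same mechanism that makes $S(1,1)$ and $S(1,2)$ exceptional.

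Done correctly, your exchange analysis shows that nothing is infected when $n\ge 2$ and $m\ne n+1$. Your $n\ge 3$ invariant (every infected edge meets $\{a,b\}$) is correct, and cleaner than the paper's timeline-dependent degree count. Together these give the true characterization: $S(1,m)$ self-percolates for every $q\ge 0$, $S(2,m)$ self-percolates if and only if $q>0$ or $m=3$, and $S(n,m)$ with $n\ge 3$ never self-percolates.
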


\begin{proof}
    $(\impliedby)$ Start with $G=H\cup V=S(n,m)+qK_1$ where $V$ is a nonempty set of $q$ vertices and $n \leq 2$. We will show that this graph $S(m,n)$-percolates. Let $v_n$ be the center vertex of the $n$-star and $v_m$ the center of the $m$-star.

    In the first round, the edges between $v_i$ ($i \in \{n,m\}$) and the $q$ isolated vertices will be infected, as any of these edges can replace an edge from their respective star.

    \begin{figure}[h]
    \centering
    \begin{tikzpicture}
	\begin{pgfonlayer}{nodelayer}
		\node [style=black dot] (0) at (-3, 1) {};
		\node [style=black dot] (1) at (-3, -1) {};
		\node [style=black dot] (2) at (-1, 3) {};
		\node [style=black dot] (3) at (1, 3) {};
		\node [style=black dot] (4) at (3, 1) {};
		\node [style=black dot] (5) at (3, -1) {};
		\node [style=black dot] (6) at (-1, -3) {};
		\node [style=black dot] (7) at (1, -3) {};
		\node [style=black dot] (8) at (-3, -1) {};
	\end{pgfonlayer}
	\begin{pgfonlayer}{edgelayer}
		\draw [style=thin edge] (0) to (8);
		\draw [style=thin edge] (0) to (2);
		\draw [style=thin edge] (0) to (3);
		\draw [style=thin edge] (0) to (4);
		\draw [style=thin edge] (8) to (6);
		\draw [style=thin edge] (8) to (7);
		\draw [style=red dashed] (8) to (5);
		\draw [style=red dashed] (0) to (5);
	\end{pgfonlayer}
\end{tikzpicture}
    \caption{An example of $G_1$ where $H=S(2,3)$ and $G=S(2,3)+1K_1$. Edges of $G$ are shown in black, edges in $G_1\backslash G$ are dashed and red.}
    \label{fig:double-star-1}
\end{figure}

    During the next time step, the remaining edges incident to the $v_i$ will become infected. To see why, say that $i \neq j \in \{n, m \}$ and consider the infected $(j+q)$-star with $v_j$ at its center. In the previous round, $v_j$ gained $q>0$ new neighbors, meaning that any arbitrary choice of leaf $x$ can be excluded and still leave enough leaves for the necessary $j$-star. Then the edge $xv_i$, along with $i-1$ of the edges from the original $i$-star, is used to form the newly-infected $i$-star.

    Additionally, edges between the $q$ vertices in $V$ will become infected at time $t=2$. If $q=1$, there are no such edges, so assume $q \geq 2$ and let $v,w \in V$. We must find an (otherwise) infected copy of $S(n,m)$ containing edge $vw$.
    
    One of the two vertices, say $v$, will serve as the copy of $v_n$, while the other will be a leaf of the $n$-star. Either $vv_n$ or $vv_m$ will be the copy of $v_nv_m$, without loss of generality say $vv_m$, and the other (in this case $vv_n$) will be the other leaf of the $n$-star if $n=2$. If $n=1$ this edge is not necessary and the vertex $v_n$ will not belong to the newly-infected copy of the double star.

    With $v_m$ serving as the copy of $v_m$, we need $m$ leaves; only $v$ and $w$ are used already in this copy, so there are $(m+n+q-2)\geq m$ options left (all of the edges from those vertices to $v_m$ were already infected), as required. This completes the copy of $S(n,m)$.

    \begin{figure}[h]
    \centering
    \begin{tikzpicture}
	\begin{pgfonlayer}{nodelayer}
		\node [style=black dot] (0) at (-3, 1) {};
		\node [style=black dot] (1) at (-3, -1) {};
		\node [style=black dot] (2) at (-1, 3) {};
		\node [style=black dot] (3) at (1, 3) {};
		\node [style=black dot] (4) at (3, 1) {};
		\node [style=black dot] (5) at (3, -1) {};
		\node [style=black dot] (6) at (-1, -3) {};
		\node [style=black dot] (7) at (1, -3) {};
		\node [style=black dot] (8) at (-3, -1) {};
	\end{pgfonlayer}
	\begin{pgfonlayer}{edgelayer}
		\draw [style=thin edge] (0) to (8);
		\draw [style=thin edge] (0) to (2);
		\draw [style=thin edge] (0) to (3);
		\draw [style=thin edge] (0) to (4);
		\draw [style=thin edge] (8) to (6);
		\draw [style=thin edge] (8) to (7);
		\draw [style=red edge] (8) to (5);
		\draw [style=red edge] (0) to (5);
		\draw [style=blue dashed] (4) to (8);
		\draw [style=blue dashed] (3) to (8);
		\draw [style=blue dashed] (2) to (8);
		\draw [style=blue dashed] (7) to (0);
		\draw [style=blue dashed] (6) to (0);
	\end{pgfonlayer}
\end{tikzpicture}
    \caption{The black edges belong to $G$ alone, the red edges belong to $G_1\backslash G$, and the dashed blue edges belong to $G_2\backslash G_1$. Because $q=1$, there was only one type of edge infected at $t=2$.}
    \label{fig:double-star-2}
\end{figure}

At time $t=3$, the remaining edges between the former leaves are now infected in the same manner as the edges between vertices of $V$ were infected in the previous round.

$(\implies)$ \textbf{Case 1:} $n>2$

The first time step must proceed exactly the same as above, and the edges from each $v_i$ (for $i\in \{n,m\}$) to each of the original $(n+m)$ leaves is infected in the same manner at time $t=2$. However, no other edges will ever become infected.

The only remaining uninfected edges are of the form $x_1x_2$, where $x_1,x_2 \in V(G)\backslash\{v_n,v_m\}$, and each of these vertices has degree 2 in $G_2$. Thus, infecting one of these edges would produce a pair of adjacent vertices of degree at most 3, but our target graph $S(m,n)$ contains no pair of adjacent vertices of degree 3 or less, so there is no way that the new edge produces a new copy of the double star.

\textbf{Case 2:} $q=0$, i.e. $V= \varnothing$

No edges will be infected after the initial infected set $G_0$ is selected.

The missing edges are one of three types: edges from $v_m$ to leaves of the $n$-star (leaves which we will denote $n_i$ with $i\in [n]$), edges from $v_n$ to leaves of the $m$-star (denoted $m_j$ with $j\in [m]$), and edges between leaves of the two stars. Neither of the first two can be infected: say we wish to infect $v_mn_i$. Then the $n$-star will have $n-1$ choices of leaves with which to form the new $n$-star. Similarly, an edge of the form $v_nm_j$ cannot be infected at this point.

Edges between leaves will create a pair of adjacent vertices of degree 2---the same problem seen in case 1. There are no edges that can be infected to create a new infected copy of $S(n,m)$.

Thus,  $S(n,m)$ is self-percolating if and only if $n \leq 2$ and only in a complete graph with more than $n+m+2$ vertices.
    
\end{proof}

There is a class of graphs known as double broom graphs, closely related to double star graphs. Let $B(n,m,k)$ be the graph consisting of an $n$-star and an $m$-star connected by a path with $k$ edges. Notice that all double stars $S(n,m)$ are also double brooms of the form $B(n,m,1)$.

While we have not confirmed any results regarding minimum weak saturation in double brooms, we suspect the following:

\begin{conjecture}
    If a double broom graph is minimum weakly saturated, then it has the form $B(n,m,2k+1)$, where $n,m\geq 1, ~k\geq 0$.
    \label{con:double-broom}
\end{conjecture}

\subsubsection{Complete Multipartite Graphs}
\label{sec:multipartite}

Existing results focus on the asymptotic behavior of complete multipartite hypergraphs using algebraic methods \cite{MultiPartite}. Here, we give combinatorial proofs regarding minimum weakly saturated complete multipartite graphs.

It is simple to see that complete bipartite graphs are not self-percolating (with the exception of $K_{1,2}$ or $K_{1,1}$). 

The more general complete $r$-partite graph is more complicated, however, and is self-percolating in some cases.

\begin{theorem}
\label{thm:multipartite}

Let $H=K_{n_1,n_2,...,n_r}$ be a complete $r$-partite graph (where $r> 2$). The graph $H$ will self-percolate to $K_n$ if and only if the following conditions hold: 

\begin{enumerate}
    \item $\Sigma_{i=1}^r n_i=n$, \\

    \item $1\leq n_i \leq 2$ for $i \in [r]$, and \\

    \item $n_i=1$ for at least one $i \in [r]$
\end{enumerate}

in which case the graph percolates in $k=2$ time steps.

\end{theorem}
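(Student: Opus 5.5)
The plan is to work with complements throughout. Write $V(H)=P_1\cup\cdots\cup P_r$ with $|P_i|=n_i$. The complement $\overline{H}$ (taken in $K_{|V(H)|}$) is the disjoint union of the cliques $K_{P_i}$. A spanning subgraph $H'$ is a copy of $H$ exactly when $\overline{H'}$ is a disjoint union of cliques whose sizes are the multiset $\{n_1,\dots,n_r\}$. I will prove the three conditions in order, using this observation each time.

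\textbf{Condition 1.} Since $r>2$, we have $\delta(H)=|V(H)|-\max_i n_i\geq r-1\geq 2$. If $n>\sum n_i$, the initial graph $H+(n-\sum n_i)K_1$ has an isolated vertex. Corollary~\ref{cor:degree} then rules out percolation, so $n=\sum_i n_i$. From now on every copy of $H$ in $K_n$ is spanning, and the only uninfected edges are the edges inside the parts $P_i$.

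\textbf{Condition 2 (no part of size $\geq 3$).} Suppose $|P_i|\geq 3$, and consider the first time step $t$ at which some edge $uv$ inside such a part $P=P_i$ is infected. At time $t-1$ no edge inside $P$ is infected, so the new copy $H'\ni uv$ satisfies $H'\subseteq G_{t-1}+uv$. Hence $\overline{H'}\supseteq K_P-uv$. Because $|P|\geq 3$, the graph $K_P-uv$ is connected and spans $P$, so all of $P$ lies in a single clique component of $\overline{H'}$. That clique must contain the edge $uv$, contradicting $uv\in E(H')$. So such edges are never infected.

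\textbf{Condition 3 (some $n_i=1$).} Suppose every $n_i=2$, and take the first step at which some within-part edge $uv$ is infected. Then $\overline{H'}$ contains every other within-part pair $\{x,x'\}$ and does not contain $uv$. In $\overline{H'}$ the vertex $u$ must lie in a clique of size $2$, so it is joined to some $x\neq v$. But $x$ is already adjacent to its own partner $x'$ in $\overline{H'}$, so the component containing $u$ has at least three vertices. This contradicts all cliques having size $2$.

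\textbf{Sufficiency and round count.} Assume conditions 1–3 hold, and let $P_j=\{w\}$ be a singleton part. For each part $P_i=\{u,v\}$ of size $2$, I take $H'=H+uv-uw$. Its complement consists of the cliques $\{u,w\}$ and $\{v\}$ together with the untouched parts. This has the same multiset of clique sizes as $\overline{H}$, so $H'$ is a new copy of $H$ that uses $uv$ and is otherwise infected. Therefore every missing edge is infected at $t=1$, the process stabilizes at $t=1$, and $k=2$.

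The step needing the most care is the necessity argument. Once $G_t$ grows beyond $H$, other copies of $H$ might appear, so I pass to the first moment an offending edge is infected. At that moment the complement of $G_{t-1}$ still contains the relevant within-part cliques, and this is exactly the invariant the clique-component argument uses.
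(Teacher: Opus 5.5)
Your proof is correct and follows the paper's argument step for step. Condition 1 uses the same isolated-vertex/minimum-degree corollary. Conditions 2 and 3 use the same ``first within-part edge'' contradictions: your connected $K_P-uv$ and clique-component language is the complement form of the paper's observation that non-adjacency in $H'$ is transitive, and your pair $\{x,x'\}$ is the paper's $z$ together with its original partner. Sufficiency uses the same swap $H'=H+uv-uw$ with a singleton part $\{w\}$. Your explicit remark that every copy is spanning once $n=\sum n_i$ fills in a point the paper leaves implicit. One wrinkle, shared with the paper: when every $n_i=1$ there is nothing to infect, so strictly $k=1$ rather than $k=2$.
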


\begin{proof}
    $(\impliedby)$ Consider edge $xy \notin E(H)$ for infection. We will show that if conditions 1-3 hold, then there exists a copy $H'$ of $H$ in $K_n$ with $xy \in E(H')$ such that all other edges of $H'$ also belong to $E(H)$.
    
    Since $H$ is a complete $r$-partite graph, vertices $x$ and $y$ must belong to the same partite set, say $I$, and $|I|=2$. Then $x$ and $y$ are each adjacent to all other vertices in $H$. Additionally, there exists a partite set $J$ with $|J|=1$, say that $J = \{z\}$. The edges $xz$ and $yz$ must belong to $H$ and are already infected.

    Let $x \in I'$ and $y \in J'$, where $I'$ and $J'$ are the copies of $I$ and $J$ in $H'$, respectively. The vertex $z$ will join $x$ in $I'$; notice that $z$ was already adjacent to every other vertex in $H$ and therefore every edge between $I'$ and $H\backslash I'$ besides $xy$ is already infected. We have already confirmed that the edges incident to $y \in J'$ are infected. Leaving the rest of the edges in $H$ alone, we have that $xy$ is the only uninfected edge necessary to form $H'$.

    $(\implies)$ We show that failing to meet one of the three listed conditions guarantees that $H$ is not self-percolating.

    \textbf{Case 1:} $n \neq \Sigma_{i=1}^r n_i$.

    Because $r>2$, then there are no vertices of degree at most one and so by Corollary \ref{cor:degree}, $H$ cannot be self-percolating.

    \textbf{Case 2:} There exists $i \in [r]$ such that $n_i \geq 3$.

    Let $x,~y,$ and $z$ be distinct vertices in the same partite set and assume for the sake of contradiction that $H$ self-percolates. Then eventually all 3 of the edges $xy,~xz,$ and $yz$ will become infected; without loss of generality say that the edge $xy$ is the first of these to become infected. Then $xy$ must belong to an otherwise already infected copy $H'$ of $H$ such that $xz,~yz \notin E(H')$. 

    Since $H'$ is a complete multipartite graph, $x$ and $z$ must belong to the same partite set in $H'$ and so must $y$ and $z$. Thus, $x$ and $y$ must also be in the same partite set in $H'$ which is impossible. Hence, $H$ is not self-percolating.

    \textbf{Case 3:} $n_i \geq 2$ for all $i \in [r]$.

    By Case 2, we can assume  $n_i = 2$ for all $i \in [r]$. Again, assume for contradiction that $H$ self-percolates.

    Consider the first new edge to become infected, say $xy$ with $x$ and $y$ in partite set $A$. Say that $x$ belongs to the copy $A'$ of $A$ in the newly infected copy $H'$ and $y$ belongs to some other partite set $B'$ in $H'$.

    Because $|A|=|A'|=2$, there is some other vertex $z \notin A$ such that $z$ joins $x$ in $A'$ in this copy of $H'$, replacing $y$. This means that $z$ must be adjacent to all vertices besides $x$ in $H'$---including the vertex that is in the same partite set as $z$ in the original $H$. This is not possible and, therefore, $H$ must not be self-percolating
    
\end{proof}

\section{Conclusion}

Extremal problems are a relatively new area of study in the field of bootstrap percolation; this is especially true for edge bootstrap percolation. We present some initial findings and a conjecture regarding self-percolating graphs. There still exist a wide variety of unexplored classes of graphs which may be self-percolating---including the double broom graphs that are the subject of Conjecture \ref{con:double-broom}.

Similarly, there exists essentially no mention of edge bootstrap percolation or weak saturation in host graphs other than the complete graph. We present only a few results that generalize to host graphs $X$ but invite further attention to this subject.

\section{Acknowledgments}

The author would like to thank Dr. Peter Johnson at Auburn University for his swift and honest feedback.

The author declares that they have no known competing financial interests or personal relationships that could have appeared to influence the work reported in this paper. This research did not receive any specific grant from funding agencies in the public, commercial, or not-for-profit sectors.


\bibliographystyle{cas-model2-names}

\bibliography{Bibliography.bib}

\end{document}